\title{A short note on Manin-Mumford}
\newtheorem{lem}{Lemma}
\newtheorem{corollary}{Corollary}
\newtheorem{prop}{Proposition}
\newcommand{\Q}{\mathbb{Q}}
\newcommand{\Z}{\mathbb{Z}}
\newcommand{\G}{\mathbb{G}}
\newcommand{\R}{\mathbb{R}}
\newcommand{\aaa}{\underline{a}}
\newcommand{\bbb}{\underline{b}}
\author{Harry Schmidt}
\address{harry.Schmidt@unibas.ch}
\begin{document}
\maketitle
\begin{abstract} We give a short proof of Manin-Mumford in the multiplicative group based on the pigeon-hole principle and the so-called structure theorem for anomalous subvarieties of $\G_m^n$. The arguments appear to be new and perhaps applicable in other situations. 
\end{abstract}
We start right away with some definitions. Let $\underline{a} \in \mathbb{Z}^n$ be  a non-zero integer vector and $N$ a rational integer $N \geq 2$. We set $S_{\underline{a}} = \Z \aaa + N\Z^n $ and $B_R = \{x \in \R^n; |x|_\infty \leq R \}$ for the max-norm $|\cdot|_\infty$. 
\begin{lem}\label{1} Let $N \geq 2^{2n}+1$ and $\underline{a} = (a_1, \dots, a_n) \in \Z^n$ be such that $\text{gcd}(a_1, \dots, a_n,N) = 1$.  Then $S_{\underline{a}} \cap B_{N^{1-\frac 1{2n}}}$ contains a non-zero vector. 
\end{lem}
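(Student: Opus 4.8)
The plan is to argue directly by the pigeonhole principle, in the spirit of the abstract. (One could instead invoke Minkowski's first theorem: the gcd hypothesis forces $\underline{a}$ to have order $N$ in $(\Z/N\Z)^n$, so $S_{\underline{a}}$ is a lattice of covolume $N^{n-1}$, and $\mathrm{vol}(B_{N^{1-\frac{1}{2n}}}) = 2^n N^{n-\frac12} > 2^n N^{n-1}$; but the elementary version is cleaner here.) Write $x = N^{1/(2n)}$, set $t = \lceil x \rceil$ and $K = t^n$, and consider the $K+1$ points
\[
v_k = \bigl(\{k a_1/N\},\dots,\{k a_n/N\}\bigr)\in[0,1)^n, \qquad k = 0,1,\dots,K,
\]
where $\{\,\cdot\,\}$ is the fractional part. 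Partition $[0,1)^n$ into the $t^n$ half-open axis-parallel boxes of side $1/t$; since $K+1 = t^n + 1 > t^n$, two of the $v_k$, say $v_{k_1}$ and $v_{k_2}$ with $0 \le k_1 < k_2 \le K$, lie in the same box.

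From this I would extract the desired vector. Put $k = k_2 - k_1 \in \{1,\dots,K\}$ and $m_i = \lfloor k_2 a_i/N\rfloor - \lfloor k_1 a_i/N\rfloor \in \Z$; then $|k a_i/N - m_i| = |\{k_2 a_i/N\} - \{k_1 a_i/N\}| < 1/t$ for every $i$, hence $|k a_i - N m_i| < N/t \le N/x = N^{1-\frac{1}{2n}}$. Thus $w := k\underline{a} - N\underline{m}$ lies in $\Z\underline{a} + N\Z^n = S_{\underline{a}}$ and in $B_{N^{1-\frac{1}{2n}}}$. To see $w \ne 0$: if $w = 0$ then $N \mid k a_i$ for all $i$; choosing $c_1,\dots,c_n,d \in \Z$ with $\sum_i c_i a_i + dN = 1$ (possible since $\gcd(a_1,\dots,a_n,N)=1$) and multiplying by $k$ gives $k = \sum_i c_i(k a_i) + d(kN) \equiv 0 \pmod N$, which is impossible once $1 \le k \le K < N$.

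The one remaining point — and the only place the hypothesis $N \ge 2^{2n}+1$ is used — is the inequality $K = t^n < N$. Since $N > 2^{2n}$ we have $x = N^{1/(2n)} > 2$, so $t < x + 1 < x^2$ (because $x^2 - x - 1 > 0$ for $x > 2$), and therefore $K = t^n < x^{2n} = N$. I expect the only real, and fairly mild, obstacle to be precisely this bookkeeping: $t$ must be chosen so that simultaneously $1/t$ is small enough to force $|w|_\infty \le N^{1-\frac{1}{2n}}$, the number of boxes $t^n$ is small enough for the pigeonhole to fire with all indices $< N$, and $K < N$ so that the gcd hypothesis can exclude $w = 0$. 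The numerical hypothesis on $N$ is exactly what makes these three constraints compatible.
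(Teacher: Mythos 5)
Your argument is correct, and it is essentially the paper's argument: both are pigeonhole-in-boxes proofs for the multiples of $\underline{a}$ modulo $N$, with the same effective box side $N^{1-\frac{1}{2n}}$ and the hypothesis $N \geq 2^{2n}+1$ entering for the same quantitative reason. The only real variation is cosmetic: the paper pigeonholes all $N$ distinct elements of $S_{\underline{a}} \cap [0,N)^n$ into roughly $2^n N^{1/2}$ boxes (so nonvanishing of the difference is automatic from distinctness), whereas you use the Dirichlet fractional-part formulation with only about $N^{1/2}$ multipliers and therefore need the extra, correctly executed, B\'ezout step with $K < N$ to rule out $w = 0$ — the gcd hypothesis playing the same role in both versions.
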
  
\begin{proof} As $\text{gcd}(a_1, \dots, a_n,N) = 1$ the reduction of $\underline{a}$ $\mod N$ has exact order $N$ in $\Z^n/N\Z^n$. Thus $S_{\aaa} \cap[0,N)^n$ contains $N$ distinct elements. We can cover the box $[0,N)^n$ by at most $([N/N^c] +1)^n \leq N^{n(1-c)}(1+N^c/N)^n$ boxes of side length $N^c$ and choosing $c = 1- 1/2n$ we find that we need at most $2^nN^{\frac 12}$ boxes of side-length $N^{1- \frac1{2n}}$.  For $N \geq 2^{2n}+1$ we have $2^nN^{\frac 12} < N$ and so there is a box containing at least two distinct elements of $S_{\underline{a}}$. Taking their difference we obtain the claim. 
\end{proof}

We need a second statement which for convenience reasons we state as a Lemma. 
\begin{lem}\label{2} Let $k \geq 2$ be a rational integer and $e = \text{gcd}(N,k)$. There exists an integer $l$ such that 
\begin{align*}
k = le \mod N \text{ and } \text{gcd}(l,N)= 1. 
\end{align*} 
\end{lem}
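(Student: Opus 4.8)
The plan is to divide out by $e$ and reduce to a pure coprimality statement modulo $N/e$, then lift a unit along the surjection of unit groups. Since $e = \gcd(N,k)$ we may write $k = e k'$ and $N = e n$ with $k', n \in \Z$, and by definition of the gcd we have $\gcd(k',n) = 1$. The congruence $k \equiv le \pmod N$ is, after dividing through by $e$, equivalent to $l \equiv k' \pmod n$. So it suffices to produce an integer $l$ with $l \equiv k' \pmod n$ and $\gcd(l,N) = 1$.

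First I would observe that any $l$ with $l \equiv k' \pmod n$ automatically satisfies $\gcd(l,n) = \gcd(k',n) = 1$, so the only primes one has to be careful about are those $p$ with $p \mid N$ but $p \nmid n$. Let $P$ be the product of these primes; then $\gcd(P,n) = 1$, and by the Chinese Remainder Theorem there is an integer $l$ with $l \equiv k' \pmod n$ and $l \equiv 1 \pmod P$. For this $l$ one has $\gcd(l,n) = 1$ and $\gcd(l,P) = 1$, and since every prime dividing $N$ divides $nP$, this yields $\gcd(l,N) = 1$. (Equivalently, one can phrase this step as: the reduction map $(\Z/N\Z)^\times \to (\Z/n\Z)^\times$ is surjective, so the unit class of $k'$ modulo $n$ lifts to a unit modulo $N$.) Finally I would verify the congruence directly: writing $l = k' + sn$ with $s \in \Z$ and multiplying by $e$ gives $le = ek' + s(en) = k + sN$, hence $le \equiv k \pmod N$.

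I do not expect a genuine obstacle here; the statement is elementary. The only points that need care are the identity $\gcd(k',n) = 1$, which is immediate from $e = \gcd(k,N)$, and the bookkeeping that pinning down $l$ modulo $n$ together with modulo the product of the remaining prime divisors of $N$ is enough to force $\gcd(l,N) = 1$; the degenerate cases $n = 1$ and $P = 1$ (empty product) are covered by the same argument.
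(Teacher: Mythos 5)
Your proof is correct and follows essentially the same route as the paper: both arguments reduce to lifting the unit $k/e$ modulo $N/e$ to a unit modulo $N$ by adjusting with the primes of $N$ not yet accounted for. The paper simply writes the lift explicitly as $l = k/e + fN/e$ with $f = \prod_{p\mid N,\ p\nmid k/e} p$, whereas you package the same idea via the Chinese Remainder Theorem (surjectivity of $(\Z/N\Z)^\times \to (\Z/(N/e)\Z)^\times$); the verification of the congruence and the coprimality is identical in substance.
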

\begin{proof} 
We can set $l = k/e + fN/e$ where $f =\prod_{p|N, p\nmid k/e}p$ and check that it satisfies the requirements. 
\end{proof} 
Now comes the Lemma actually relevant for the proof of Manin-Mumford. 

\begin{lem}\label{3} Let $(\zeta_1, \dots, \zeta_n) \in \G_m^n$ be of order $N$. There exists an integer $e \leq N^{1-\frac1{2n}}$ dividing $N$ and a primitive $N$-th root of unity $\zeta_N$ such that $\zeta_i = \zeta^{(i)}_{e}\zeta_N^{k_i}, i = 1,\dots, N$ where $|k_i| \leq N^{1-\frac1{2n}}/e$ and $\zeta^{(i)}_{e}$ is an $e$-th root of unity for $i=1, \dots, n$. 
\end{lem}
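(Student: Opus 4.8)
I would argue by passing to exponents. Fix a primitive $N$-th root of unity $\omega$ and write $\zeta_i=\omega^{a_i}$ with $a_i\in\Z$; since $(\zeta_1,\dots,\zeta_n)$ has order $N$ we have $\gcd(a_1,\dots,a_n,N)=1$, so $\aaa=(a_1,\dots,a_n)$ satisfies the hypotheses of Lemma~\ref{1} whenever $N\ge 2^{2n}+1$. The case $N\le 2^{2n}$ is immediate: then $N^{1-\frac1{2n}}=N/N^{\frac1{2n}}\ge N/2$, so one takes $e=1$, $\zeta_N=\omega$, and lets $k_i$ be the representative of $a_i$ modulo $N$ with $|k_i|\le N/2$. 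Assume from now on that $N\ge 2^{2n}+1$.

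By Lemma~\ref{1} choose a non-zero $\z\in S_\aaa\cap B_{N^{1-\frac1{2n}}}$ and write $\z=m\aaa+N\underline w$ with $m\in\Z$, $\underline w\in\Z^n$. Reducing $m$ modulo $N$ (and absorbing the change into $\underline w$) we may take $m\in\{0,1,\dots,N-1\}$, and since $\z\neq 0$ while $|\z|_\infty<N$ and $\z\equiv m\aaa\pmod N$, necessarily $m\ge 1$. Put $e=\gcd(m,N)$, a divisor of $N$. The single short vector $\z$ now controls both quantities in the conclusion: as $e\mid m$ and $e\mid N$, each coordinate $z_i=ma_i+Nw_i$ is divisible by $e$, so defining $k_i$ by $z_i=ek_i$ we obtain $|k_i|=|z_i|/e\le N^{1-\frac1{2n}}/e$ for every $i$; and choosing a coordinate with $z_i\neq 0$ gives $e\le|z_i|\le N^{1-\frac1{2n}}$.

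It remains to exhibit the primitive root $\zeta_N$. If $m=1$ put $l=1$; if $m\ge 2$ apply Lemma~\ref{2} with $k=m$ to obtain $l$ with $m\equiv le\pmod N$ and $\gcd(l,N)=1$; in either case $\gcd(l,N)=1$ and $m\equiv le\pmod N$. Let $l^\ast$ be an inverse of $l$ modulo $N$ and set $\zeta_N=\omega^{l^\ast}$, which is a primitive $N$-th root of unity. From $z_i\equiv ma_i\equiv le\,a_i\pmod N$ we get $l^\ast z_i\equiv e\,a_i\pmod N$; since $z_i=ek_i$ this reads $e(l^\ast k_i-a_i)\equiv 0\pmod N$, which, as $e\mid N$, gives $a_i\equiv l^\ast k_i\pmod{N/e}$. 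Hence $N/e\mid a_i-l^\ast k_i$, so $\zeta^{(i)}_e:=\zeta_i\zeta_N^{-k_i}=\omega^{a_i-l^\ast k_i}$ lies in the group $\langle\omega^{N/e}\rangle$ of $e$-th roots of unity, and $\zeta_i=\zeta^{(i)}_e\zeta_N^{k_i}$ is of the required form.

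The only step that needs an idea is the middle one: recognising that $e=\gcd(m,N)$ is automatically at most $N^{1-\frac1{2n}}$ (it divides a non-zero coordinate of the short vector furnished by Lemma~\ref{1}), while the same divisibility makes $z_i/e$ at once an admissible and a sufficiently small exponent $k_i$. Lemma~\ref{2} is then pure bookkeeping: it replaces the possibly non-invertible multiplier $m$ by an invertible $l$ with $m\equiv le$, so that the change of primitive root $\omega\mapsto\omega^{l^\ast}$ turns the relation $\z\equiv m\aaa$ into the asserted normal form. Beyond handling these divisibilities carefully, together with the trivial case $m=1$, I anticipate no real difficulty.
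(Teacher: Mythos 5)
Your proof is correct and follows essentially the same route as the paper: apply Lemma~\ref{1} to the exponent vector, set $e=\gcd(m,N)$ so that $e$ divides the short vector and bounds both $e$ and the $k_i$, then use Lemma~\ref{2} to replace the multiplier by an invertible one and change the primitive root. You are in fact a bit more careful than the paper, treating explicitly the cases $N\le 2^{2n}$, $m=0$ and $m=1$, which the paper passes over silently.
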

\begin{proof} As $\underline{\zeta} = (\zeta_1, \dots, \zeta_n) $ is of order $N$ we can write it as $\underline{\zeta} = \zeta^{\aaa}$ for a primitive $N$-th root of unity $\zeta$ and an integer vector $\aaa = (a_1, \dots, a_n)$ such that $\text{gcd}(a_1, \dots, a_n,N) = 1$. By Lemma \ref{1} we can find a non-zero integer vector $\bbb$ and an integer $k$ such that $\bbb = k\aaa \mod N$ and $|\bbb|_\infty \leq N^{1 -\frac1{2n}}$. Let $e$ be the greates common divisor of $k$ and $N$. Then $e$ divides all entries of $\bbb$ and we can write  $\bbb = e\underline{c}$ for an integer vector $\underline{c}$ satifying $|\underline{c}|_\infty \leq N^{1-\frac1{2n}}/e$. By Lemma \ref{2} we obtain that $ef\underline{c} = e\aaa \mod N$ for an integer $f$ invertible modulo $N$. Thus setting $\zeta_N = \zeta^f$ we obtain the Lemma. 
\end{proof}

As a corollary of Lemma \ref{3} we obtain the following statement that brings us closer to Manin-Mumford. (In fact it already implies Manin-Mumford for curves in $\G_m^n$.)
\begin{corollary}\label{cor} Let $V \subset \G_m^n$ be an algebraic variety defined over a number field $K$. There exists an integer $M$ depending only on the degree of $K$ and the geometric degree of $V$ such that if $\underline{\zeta} \in V(\overline{\Q})$ is of order  at least $M$ then $\underline{\zeta}$ is contained in a torsion co-set $T$  (of positive dimension) that is contained in $V$.    
\end{corollary}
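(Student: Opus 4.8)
The plan is to prove the sharper assertion from which the Corollary is immediate: if $\underline{\zeta}\in V(\QQ)$ has order $N$ at least a bound $M$ depending only on $n$, $[K:\Q]$ and the geometric degree of $V$, then the \emph{particular} torsion coset $T:=\underline{\eta}\,\Lambda$ furnished by Lemma~\ref{3} is contained in $V$. Here $\underline{\eta}=(\zeta^{(1)}_{e},\dots,\zeta^{(n)}_{e})$ is a torsion point whose order divides $e\le N^{1-\frac1{2n}}$, and $\Lambda:=\{(t^{k_1},\dots,t^{k_n}):t\in\G_m\}$ is the $1$-dimensional subtorus attached to the non-zero vector $\underline{k}=(k_1,\dots,k_n)$, so $\underline{\zeta}=\underline{\eta}\cdot(\zeta_N^{k_1},\dots,\zeta_N^{k_n})\in T$ and $T$ is a torsion coset of dimension $1$. (We may assume $V$ is irreducible, replacing it by the component through $\underline{\zeta}$, which only decreases its degree, and $V\neq\G_m^n$, the latter case being trivial.)

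First I would set up the Galois orbit. Since $\gcd(a_1,\dots,a_n,N)=1$ in the notation of Lemma~\ref{3}, some $\Z$-linear combination of the $a_i$ is $\equiv 1\pmod N$, so $\zeta\in K(\underline{\zeta})$ and hence $K(\underline{\zeta})=K(\zeta)$ with $\zeta$ a primitive $N$-th root of unity. Thus the orbit of $\underline{\zeta}$ under $\Gal(\QQ/K)$ is $\{\underline{\zeta}^{\,a}:a\in G\}$, where $G\subseteq(\Z/N\Z)^\times$ is the image of the cyclotomic character; as $\underline{\zeta}$ has order exactly $N$ these points are pairwise distinct, so the orbit has $|G|=[K(\zeta):K]\ge\phi(N)/[K:\Q]$ elements, all lying in $V$. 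Since $\Lambda$ is defined over $\Q$ and every $\sigma\in\Gal(\QQ/K)$ acts on $\mu_N$ by $x\mapsto x^{a(\sigma)}$, one has $\sigma T=\underline{\eta}^{\,a(\sigma)}\Lambda$: again a torsion coset of dimension $1$, containing $\sigma\underline{\zeta}=\underline{\zeta}^{\,a(\sigma)}$. As $\sigma$ varies this produces at most $\operatorname{ord}(\underline{\eta})\le e$ distinct cosets $C_1,\dots,C_s$, each a translate of $\Lambda$.

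Now suppose for contradiction that $T\not\subseteq V$. Since $V$ is $\Gal(\QQ/K)$-stable, $\sigma T\not\subseteq V$ for all $\sigma$, so each $C_i\cap V$ is a proper closed, hence finite, subset of the irreducible curve $C_i$. To bound $\#(C_i\cap V)$, pick a hypersurface $\{F=0\}\supseteq V$ with $\deg F\le\deg V$ not vanishing identically on $C_i$ and substitute the parametrisation $t\mapsto(\eta_1 t^{k_1},\dots,\eta_n t^{k_n})$ of $C_i$ (with $(\eta_1,\dots,\eta_n)=\underline{\eta}^{\,a}$) into $F$: the result is a non-zero Laurent polynomial in $t$ whose exponents lie in an interval of length $\le 2\deg V\cdot\max_i|k_i|\le 2\deg V\cdot N^{1-\frac1{2n}}/e$, so it has at most that many roots in $\G_m$ and therefore $\#(C_i\cap V)\le 2\deg V\cdot N^{1-\frac1{2n}}/e$. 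Every orbit point lies in some $C_i\cap V$, so
\[
\frac{\phi(N)}{[K:\Q]}\ \le\ \sum_{i=1}^{s}\#(C_i\cap V)\ \le\ e\cdot 2\deg V\cdot\frac{N^{1-\frac1{2n}}}{e}\ =\ 2\deg V\cdot N^{1-\frac1{2n}} .
\]
Combined with the elementary bound $\phi(N)\gg N/\log\log N$ this gives $N^{\frac1{2n}}\ll[K:\Q]\,\deg V\,\log\log N$, which is false once $N\ge M$ for a suitable $M=M(n,[K:\Q],\deg V)$ (we also take $M\ge 2^{2n}+1$, so that Lemma~\ref{3} applies). This contradiction shows $T\subseteq V$, as required.

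The one genuinely delicate point is the cancellation of $e$ in the displayed estimate: Lemma~\ref{3} is used not to control a single coset but to trade the order of the torsion part $\underline{\eta}$ (at most $e$, bounding the number $s$ of candidate cosets) against the degree of the toric part $\Lambda$ (bounded by $N^{1-\frac1{2n}}/e$, bounding the number of points of $V$ on each coset), the product being independent of $e$; this is exactly the shape of information Lemma~\ref{3} provides, so I expect the argument to go through. The remaining issues are routine: making the Bézout-type count precise — here done by the substitution-and-count trick, which conveniently avoids any question of how translation by $\underline{\eta}$ affects the degree of $\Lambda$ — and checking the inequalities involving $\phi(N)$. Note that the structure theorem for anomalous subvarieties plays no role at this stage; it enters only when one bootstraps this Corollary to the full Manin--Mumford statement for higher-dimensional $V$.
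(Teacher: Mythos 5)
Your proof is correct and is essentially the paper's own argument: it rests on the same decomposition from Lemma \ref{3}, the same substitution of the coset's parametrization into an equation of degree bounded by $\deg V$ to produce a Laurent polynomial of degree at most $2\deg V\,N^{1-\frac{1}{2n}}/e$, and the same comparison with the cyclotomic degree lower bound, with the factor $e$ cancelling exactly as in the paper. The only difference is bookkeeping: you count the $\Gal(\QQ/K)$-orbit of $\underline{\zeta}$ distributed over at most $e$ conjugate cosets, whereas the paper counts the conjugates of $\zeta_N$ over $K(\zeta_e)$ (a field of degree at most $[K:\Q]e$) as roots of a single Laurent polynomial --- the same count in dual form.
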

\begin{proof} Suppose $V$ is defined by the vanishing of $P_1,\dots, P_d$. Their total degree is bounded in terms of the degree of $V$ and we argue with all of them simultaneously. So let $P \in K[X_1,\dots, X_n]$ be one of the polynomials $P_1, \dots, P_d$ and let $\underline{\zeta}$ be a point of order $N$ on $V$. We may assume that the degree of $P$ is maximal among the degrees of $P_1,\dots, P_d$. Then $P(\underline{\zeta}) = 0$ and by Lemma \ref{3}  we can write 
$$(\zeta_1, \dots, \zeta_n) = (\zeta_e^{(1)}\zeta_N^{k_1}, \dots, \zeta_e^{(n)}\zeta_N^{k_n})$$ 
for $e$-th roots of unity $\zeta_e^{(i)}, i = 1, \dots, n$ and $|k_i| \ll N^{\frac 12}/e$. The Laurent-polynomial 
$$p (x) =P(\zeta_e^{(1)}x^{k_1}, \dots, \zeta_e^{(n)}x^{k_n}) \in K(\zeta_e)[x,x^{-1}], (\zeta_e \text{ a primitve $e$-th  root of } 1)$$
has degree at most $2\deg(P)N^{\frac12}/e$ and is defined over a field of degree at most $[K:\Q]e$ over $\Q$. However it vanishes at $x = \zeta_N$ which has degree at least $c_\epsilon N^{1- \epsilon}$ over $\Q$. Thus taking Galois conjugates we find that $p$ has at least $c_\epsilon N^{1-\epsilon}/([K:\Q]e)$ zeroes. So if $p$ is not identically zero  $c_\epsilon N^{1-\epsilon}/([K:\Q]e) \leq 2\deg(P)N^{1-\frac1{2n}}/e$ and  picking $\epsilon < \frac1{2n}$ we find that $N$ is bounded in terms of $[K:\Q]$ and $\deg(P)$ unless $p$ is the zero-polynomial. However if $p$ is the zero-polynomial then $\underline{\zeta}$ is contained in a torsion co-set lying in the zero-locus of $P$. As we performed this argument simultaneously for all $P_1, \dots, P_d$ we are finished.    
\end{proof}


We have proved that the set of torsion points on a variety $V$ is contained in a finite union of points and a union of torsion co-sets of dimension 1 contained in $V$. Somehow surprisingly this is enough to deduce the full Manin-Mumford conjecture using the structure theorem for anomalous subvarieties. 
\begin{prop} Let $V$ be a subvariety of $\G_m^n$. Suppose that 
\begin{align*}
V\cap (\G_m^n)_{\text{tors}} \subset  \bigcup_{\text{finite}  }\zeta \bigcup_{T \subset V }T, \ T \text{ torsion co-set of dimension 1 },  \zeta \text{ a root of 1.}
\end{align*}
Then 
\begin{align*}
\overline{V\cap (\G_m^n)_{\text{tors}}}^{\text{Zariski}} =   \bigcup_{\text{finite}  } H, \ H  \text{ torsion co-set}.
\end{align*}
\end{prop}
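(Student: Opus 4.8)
The plan is to deduce the statement — which, in view of the Corollary, is really full Manin--Mumford — by a double induction, the outer one on $n$ and the inner one on $\dim V$, the inner step being carried out via the structure theorem for anomalous subvarieties. The base cases $n=1$ (where $V$ is $\G_m$ or a finite set) and $\dim V=0$ are immediate. After the standard specialisation reduction to $V$ defined over $\QQ$, I would first record that \emph{every} subvariety $X\subseteq\G_m^m$ over $\QQ$ already satisfies a hypothesis of the type in the Proposition, with finite exceptional set the torsion points of order $<M$ on $X$: by the Corollary a torsion point of order $\geq M$ lies on a positive-dimensional torsion co-set $\eta K\subseteq X$, and, being torsion, it then also lies on the one-dimensional torsion co-set $\eta K'\subseteq\eta K\subseteq X$ for any one-dimensional subtorus $K'\subseteq K$. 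Thus I never need to propagate the hypothesis: it is re-supplied by the Corollary at each stage. This also disposes of the reducible case, since $\overline{V\cap(\G_m^n)_{\text{tors}}}=\bigcup_i\overline{V_i\cap(\G_m^n)_{\text{tors}}}$ over the irreducible components $V_i$, and of the case $V=\G_m^n$. So assume $V$ irreducible and proper, $d:=\dim V\geq 1$. Finally I record that every irreducible component $W$ of $\overline{V\cap(\G_m^n)_{\text{tors}}}$ equals the Zariski closure of its own torsion points (immediate from $\overline{V\cap(\G_m^n)_{\text{tors}}}$ being the closure of a set of torsion points).

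The inner step splits according to whether $V$ is degenerate, the degenerate cases being: $V$ is contained in a translate $\zeta H$ of a proper subtorus, or $V$ is a union of co-sets of a non-trivial subtorus $H$, i.e.\ $V=\pi_H^{-1}(\overline{\pi_H(V)})$ for $\pi_H\colon\G_m^n\to\G_m^n/H$. In the first case, either $\zeta H$ has no torsion point and we are done, or $\zeta H$ is a torsion co-set of dimension $<n$ and $\overline{V\cap(\G_m^n)_{\text{tors}}}\subseteq\zeta H$. In the second case $\overline{\pi_H(V)}$ is a \emph{proper} subvariety of $\G_m^n/H$ (since $\dim\overline{\pi_H(V)}=d-\dim H<n-\dim H$), the inductive hypothesis on $n$ makes $\overline{\pi_H(V)\cap(\G_m^n/H)_{\text{tors}}}$ a finite union of torsion co-sets of $\G_m^n/H$ of dimension $<n-\dim H$, and, since $\pi_H$ sends torsion points to torsion points and $\pi_H^{-1}$ sends torsion co-sets to torsion co-sets, $\overline{V\cap(\G_m^n)_{\text{tors}}}\subseteq\pi_H^{-1}\big(\overline{\pi_H(V)\cap(\G_m^n/H)_{\text{tors}}}\big)$ is contained in a finite union of torsion co-sets of $\G_m^n$ of dimension $<n$. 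So in either degenerate case $\overline{V\cap(\G_m^n)_{\text{tors}}}$ is contained in finitely many torsion co-sets of dimension $<n$; then each component $W$ of it lies in one of these, say in $\zeta K$ with $\dim K<n$, and translating to $K\cong\G_m^{\dim K}$ and invoking the inductive hypothesis on $n$ (the hypothesis being supplied by the Corollary) shows that $W$, hence $\overline{V\cap(\G_m^n)_{\text{tors}}}$, is a finite union of torsion co-sets.

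In the remaining, non-degenerate case the structure theorem for anomalous subvarieties gives that $V^{oa}$ is a dense open subset of $V$, so $Z:=V\setminus V^{oa}$, the union of all anomalous subvarieties of $V$, is a \emph{proper} Zariski-closed subset, with $\dim Z<d$. The key observation is that \emph{every one-dimensional torsion co-set $T=\eta K$ contained in $V$ is anomalous in $V$}: the defining inequality $\dim T=1\geq\max(1,\dim V+\dim K-n+1)=\max(1,d+2-n)$ holds because $V$ is proper, $d\leq n-1$. Hence every such $T$ lies in $Z$, so the one-dimensional torsion co-sets contained in $Z$ are precisely those contained in $V$. Now $Z$ is a subvariety of $\G_m^n$ over $\QQ$ with $\dim Z<d$, so it satisfies the hypothesis (Corollary) and the inductive hypothesis on the dimension applies: $\overline{Z\cap(\G_m^n)_{\text{tors}}}$ is a finite union of torsion co-sets. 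Finally, using the hypothesis and the density of torsion in a torsion co-set, $V\cap(\G_m^n)_{\text{tors}}$ is a finite set of torsion points together with $\bigcup_{T\subseteq V}\big(T\cap(\G_m^n)_{\text{tors}}\big)$, and this union differs from $Z\cap(\G_m^n)_{\text{tors}}$ by at most a finite set (each $T\subseteq Z$, while $Z\cap(\G_m^n)_{\text{tors}}$ exceeds the union only by torsion points of bounded order on $Z$). Hence $\overline{V\cap(\G_m^n)_{\text{tors}}}$ and $\overline{Z\cap(\G_m^n)_{\text{tors}}}$ differ by at most finitely many points; and a closed set equal to a finite union of torsion co-sets up to finitely many points is again one — the discrepancy can only occur at isolated point-components, since a positive-dimensional torsion co-set minus a point is not closed — which completes the induction.

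The step I expect to be the genuine obstacle is the correct invocation of the structure theorem: one must extract from it the trichotomy used above — either $V$ lies in a translate of a proper subtorus, or $V$ is a union of co-sets of a non-trivial subtorus, or $V^{oa}$ is dense open in $V$ (so that $V\setminus V^{oa}$ has dimension $<\dim V$) — and then interlock the induction on $n$, which feeds off the first two cases, with the induction on $\dim V$, which feeds off the third. The rest is routine bookkeeping: the anomaly-inequality check, the density of torsion in torsion co-sets, and the comparison of the Zariski closures of the torsion loci of $V$ and of $Z$.
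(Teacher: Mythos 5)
The load-bearing step of your non-degenerate case is the claim that the structure theorem gives that $V^{oa}$ is a \emph{dense} open subset of $V$ once $V$ is neither contained in a translate of a proper subtorus nor a union of co-sets of a non-trivial subtorus. The structure theorem only gives that $V^{oa}$ is open (the complement of the finitely many closed sets $Z_H$, $H\in\Phi$); it can be empty under exactly your two non-degeneracy hypotheses. For example, let $V\subset\G_m^4$ be the Zariski closure of $\{(x,\,x+t,\,t,\,t+1)\}$. The four coordinate functions are multiplicatively independent modulo constants, so $V$ lies in no translate of a proper subtorus, and a direct check shows $\operatorname{Stab}(V)$ is finite; yet every point of $V$ lies on a curve $t=t_0$, which is contained in a co-set of the two-dimensional torus $\{(\ast,\ast,1,1)\}$ and is therefore $V$-anomalous, since $1\geq\max(1,\,2+2-4+1)$. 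Hence $Z=V\setminus V^{oa}=V$, your inductive step ``replace $V$ by $Z$, which has smaller dimension'' is vacuous, and the induction does not close. Note that assuming the hypothesis of the Proposition does not rescue this: $Z$ is the union of \emph{all} anomalous subvarieties, not just those containing one-dimensional torsion co-sets, so nothing forces $\dim Z<\dim V$ even when $V$ contains few torsion co-sets.

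This missing case --- infinitely many maximal anomalous subvarieties $Y_T$ (those containing the torsion co-sets $T\subseteq V$) sitting inside infinitely many distinct torsion translates $gH$ of one fixed torus $H$ from the finite collection $\Phi$ --- is precisely where the paper does its real work. There one uses the finiteness of $\Phi$ quantitatively: each such $g$ gives a torsion point $[g]$ on the image of $V$ in $\G_m^n/H$ with an over-sized fibre, the induction applied in the quotient (where, as you correctly observe, the hypothesis is re-supplied by the Corollary) shows that these classes lie in a finite union $U$ of torsion co-sets of positive codimension, and intersecting $V$ with the preimage of $U$ produces a proper closed subset of $V$ containing all the relevant $Y_T$, so the dimension genuinely drops. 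Your other steps --- re-supplying the hypothesis from the Corollary, the two degenerate cases via a smaller torus or a quotient, the observation that a one-dimensional torsion co-set in a proper $V$ is anomalous, and the comparison of the two closures up to finitely many points --- are sound and close in spirit to the paper; but without a treatment of the case $V^{oa}=\emptyset$ the proof is incomplete.
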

\begin{proof} We argue by induction on $\dim(V) + n$. For $\dim(V) + n \leq 2$ the statment is trivial.  We may assume that $V$ is irreducible by picking a component first. Each torsion co-set $T$ contained in $V$ is a $V$-anomalous subvariety of $V$. By the structure theorem \cite[Theorem 1.4 b)]{BMZstructure} there is a finite set of tori $\Phi$ such that the maximal $V$-anomalous variety $Y_T\subset V $ that contains such a torsion co-set $T$ is contained in a translate of a torus $H \in \Phi$. As $Y_T$ contains $T$ this translate is in fact a torsion translate.  If $Y_T = V$ for such a $Y_T$ then $V$ is contained in a torsion translates of a torus $H$ and thus by projecting a torsion translate of $V$ to  $H$ we can lower the dimension of $n$ and proceed by induction. Thus we may and will assume that for all  $Y_T\subset V$, $\dim(Y_T) < \dim(V)$. If the number of the maximal $Y_T$ is finite we can proceed by induction by restricting to $Y_T$ and thus lowering $\dim(V)$. Hence the remaining case to consider is that there is a fixed torus $H$ for which there are infinitely many $g \in (\G_m^n)_{\text{tors}}$ such that $\dim(gH \cap V) > \dim(V) + \dim H - n$. By translating $V$ by a torsion point if necessary we may assume that $\dim(H \cap V) > \dim(V) + \dim(H) -n$ and  we can read this as $\dim(V/H) < \dim (\G_m^n/H)$. Moreover a $g$ as in the previous sentence has the property that $[g] \in V/H$ where $g$ is the class of $g$ in $\G_m^n/H$. By induction  we deduce that the union of all $[g] \in V/H$ is a finite union $U$ of torsion translates (and points)  of $\G_m^n/H$ of positive co-dimension. We can write $U = H + U_1$ where $U_1$ is a representative of $U$ which is a finite union of torsion translates and points in $\G_m^n$ of positive co-dimension and we deduce that $Y_T \subset U + U_1 \neq \G_m^n$ for all $Y_T$ contained in a translate of $H$. By intersecting $V$ with $U + U_1$ we lower the dimension of $V$ and thus can proceed by induction.

\end{proof}
We finish by pointing out some of the differences between the present arguments and the proof of Tate presented in \cite[Theorem 6.1]{LangFundamentals}. Tate's argument also uses the fact that Galois orbits of roots of unity are large. But his strategy only requires a weaker polynomial growth in $N$, while our approach exploits the fact that the degree of a primitive $N$-th root of unity $\zeta$ grows like $\gg_\epsilon N^{1-\epsilon}$ (for all $\epsilon > 0$). Another important ingredient in Tate's proof is Bézout's theorem. More precisely one needs to estimate the number of points in the intersection $lC \cap C$ where $C$ is a curve and $lC$ is the curve obtained by multiplying the coordinate functions of $C$ by $l$ (or raising them to the power $l$) which implicitly uses the fact that we can estimate the degree of $lC$ appropriately. Our approach is more direct, in that we only need a naive estimate on the number of zeroes of a polynomial. The use of intersection theory is also central to Hindry's work \cite{Hindryautour} and it is somewhat interesting that one can replace it with the structure theorem of Bombieri, Masser and Zannier in our situation. Particularly since it can be proven without using intersection theory (for example by using functional transcendence and $o$-minimality). It is conceivable that the current proof can be extended to situations where we have a Tate parametrization of our group variety. More interestingly, we can apply these arguments to polynomial dynamical systems where the parametrization by $\G_m$ is given by Böttcher coordinates.  This is the content of future work. Finally, if one works over function fields (of any characteristic) these arguments seem to apply to the $j$-line. This will also be exploited in later work.  
\subsection*{Acknowledgements:} I heartily thank Fabrizio Barroero, Gabriel Dill, Richard Griffon, Lars Kühne and Myrto Mawraki for helpful discussions during the development of this work and for reading a first draft. An additional thanks goes to Garbiel Dill for pointing out a mistake in an earlier draft.  

\bibliography{Galoisbounds}
\bibliographystyle{amsalpha}

\end{document}